\newtheorem{thm}{Theorem}[section]
\newtheorem{lem}[thm]{Lemma}
\newtheorem{prop}[thm]{Proposition}
\newtheorem{conj}[thm]{Conjecture}
\theoremstyle{definition}
\newtheorem{rem}[thm]{Remark}
\numberwithin{equation}{section}
\def\al{\alpha}
\def\be{\beta}
\def\ga{\gamma}
\def\de{\delta}
\def\Q{\mathbb{Q}}
\def\C{\mathbb{C}}
\def\N{\mathbb{N}}
\def\sgn{\text{\rm sgn}}
\newcommand{\SU}{\mathrm{SU}}
\title[Mathieu Conjecture]
{On the Mathieu conjecture for $SU(2)$}
\author{Teun Dings, Erik Koelink}
\address{Radboud Universiteit, IMAPP, 
Heyendaalseweg 135, 
6525 GL Nijmegen, 
The Netherlands}
\email{teundings@gmail.com, e.koelink@math.ru.nl}
\date{\today}
\begin{document}

\begin{abstract} We study the Mathieu Conjecture for $SU(2)$ using the matrix elements of its unitary irreducible representations. 
We state a conjecture for the particular case $SU(2)$ implying the Mathieu Conjecture for $SU(2)$. 
\end{abstract}

\maketitle
\section{Introduction}\label{sec:intro}

\begin{conj}[Mathieu \cite{Math}] \label{conj:mathieu}
Let $G$ be a compact connected Lie group and let $f$ be complex-valued $G$-finite function on $G$ such that 
$\int_G f^P(g)\, dg=0$ for every $P\in \mathbb{N}_{>0}$. Then 
for any complex-valued $G$-finite function $h$ on $G$ we have $\int_G f^P(g)h(g)\, dg=0$ for $P \gg 0$.
\end{conj}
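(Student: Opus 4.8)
The plan is to translate the analytic statement into representation theory via Peter--Weyl and then to lift the already solved abelian case to an arbitrary $G$. First I would use the Peter--Weyl theorem to identify the space of $G$-finite functions with the algebra of representative functions, spanned by the matrix coefficients $\pi^\lambda_{ij}$ of the irreducible unitary representations $\pi^\lambda$ labelled by dominant weights $\lambda$. Under this identification $\int_G \phi(g)\,dg$ is exactly the coefficient of the constant function in the Peter--Weyl expansion of $\phi$, that is, the multiplicity of the trivial representation. Writing $f=\sum_\lambda \sum_{i,j} c^\lambda_{ij}\,\pi^\lambda_{ij}$, the hypothesis $\int_G f^P=0$ for all $P$ and the desired conclusion $\int_G f^P h=0$ for $P\gg0$ become statements about the trivial component of the products $f^P$ and $f^P h$; since products of matrix coefficients decompose through tensor products of representations, these components are controlled by the Clebsch--Gordan/Littlewood--Richardson structure constants, which are nonnegative integers.

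Second, I would attach to $f$ its support $S(f)$, the finite set of dominant weights occurring in $f$, and model the argument on the torus. For $G=T$ a maximal torus the $G$-finite functions are Laurent polynomials, $\int_T$ extracts the constant term, and the Duistermaat--van der Kallen theorem settles the conjecture: the vanishing of the constant term of $f^P$ for all $P$ forces $S(f)$ to lie in an open half-space through the origin, after which the constant term of $f^P h$ vanishes once $P$ exceeds a bound determined by $S(f)$ and $S(h)$. The feature exploited there is the absence of cancellation in a leading cone of the support. I would aim to establish the analogous dichotomy for general $G$: the vanishing of the trivial-representation multiplicity of $f^P$ for all $P$ should force $S(f)$ to lie strictly on one side of a hyperplane in weight space, equivalently to be collapsed by the one-parameter subgroup coming from a suitable integral coweight.

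Third, granted such a half-space condition I would reduce to the abelian result. Using the Weyl integration formula $\int_G \phi\,dg=\frac{1}{|W|}\int_T \phi(t)\,|\delta(t)|^2\,dt$ for class functions, together with the averaging $\phi\mapsto\int_G \phi(gxg^{-1})\,dg$ that sends any representative function to a class function without changing $\int_G$, I would transport the problem to $T$, where the weights of $f$ and $h$, twisted by the Weyl denominator $\delta$, become Laurent monomials and the toral case applies. The half-space condition on $S(f)$ should then yield the uniform threshold $P\gg0$ beyond which $\int_G f^P h\,dg=0$.

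The hard part, and the reason the conjecture remains open beyond tori and a handful of low-rank groups, is the second step: the passage from \emph{no cancellation} to the half-space conclusion. In the abelian case the constant term of $f^P$ is literally a sum over lattice points with multinomial, hence strictly positive, coefficients, so a single nonzero leading contribution must survive; for nonabelian $G$ the multiplicity of the trivial representation in a tensor power is a subtler positive quantity whose growth in $P$ is governed by the Weyl group action, the denominator $\delta$, and the nonlinearity of the Littlewood--Richardson rule. Controlling that growth, and in particular proving that a nontrivial leading weight cannot have its contribution cancelled against the $|\delta|^2$ density, is exactly where the toral argument breaks down; obtaining effective estimates on these multiplicities is the main obstacle to a full proof.
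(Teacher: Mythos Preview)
The statement you are trying to prove is Conjecture~\ref{conj:mathieu}, which is \emph{open}; the paper does not prove it and does not claim to. What the paper actually does is verify the conjecture for $SU(2)$ in the very special cases where $f$ is a single matrix element, a sum of two, or a sum of three with a rank condition, and then formulates Conjecture~\ref{conj:nonzerointegralpowers} (a half-space condition on the $(m_i,n_i)$) which, if true, would imply Mathieu for $SU(2)$. So there is no ``paper's own proof'' to compare against.

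Your proposal is really a proof \emph{strategy}, and you yourself identify the fatal gap: step two, the passage from ``$\int_G f^P=0$ for all $P$'' to a half-space condition on the support $S(f)$, is exactly the unsolved heart of the problem. For $SU(2)$ this is precisely Conjecture~\ref{conj:nonzerointegralpowers} of the paper, which even in that smallest nonabelian case remains open (see Remark~\ref{rmk:support3} on the uncontrolled cancellations when $\text{rank}(M)=2$). So your step two is not a reduction but a restatement of the difficulty.

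There is also a concrete error in step three. The conjugation-averaging map $\phi\mapsto\int_G\phi(gxg^{-1})\,dg$ is linear and preserves $\int_G$, but it does \emph{not} commute with taking powers: averaging $f^Ph$ does not produce $(\text{average of }f)^P\cdot(\text{average of }h)$. Hence after transporting to $T$ you no longer have a power of a fixed Laurent polynomial, and the Duistermaat--van der Kallen theorem does not apply. The Weyl integration formula route therefore does not reduce the general $G$ case to the toral one in the way you suggest; if it did, the conjecture would have been settled long ago.
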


The Mathieu Conjecture \ref{conj:mathieu} dates back to 1997 and  is closely related to the Jacobian conjecture, 
since it actually implies the Jacobian
conjecture, see \cite{Math}. See van den Essen \cite{Esse}, Smale \cite{Smal} for more 
information on the history of the  Jacobian conjecture. 
The Mathieu Conjecture \ref{conj:mathieu} was proved for abelian compact groups by Duistermaat and Van der Kallen \cite{DuisvdK} in 1998. 
We study the Mathieu Conjecture \ref{conj:mathieu} for the 
case $G=SU(2)$. Using explicit formulas for the Haar measure and known representation theoretic properties of $SU(2)$ we make 
the Mathieu Conjecture \ref{conj:mathieu} more explicit. In particular, we use the fact that
$SU(2)$-finite functions are finite linear combinations of matrix elements of finite dimensional
irreducible representations of $SU(2)$ and that the matrix elements behave well under a subgroup $K\cong U(1)$ according to 
suitable characters. 
Note that the Mathieu Conjecture  \ref{conj:mathieu} is linear in the $G$-finite function $h$, but not in the $G$-finite function $f$. 
By the Peter-Weyl theorem, any $SU(2)$-finite function is the finite linear combination of 
matrix elements of irreducible representations.
After recalling the necessary results on $SU(2)$ in Section \ref{sec:SU2}, we show in Section \ref{sec:MathieuConjforSU2small} 
the validity of the Mathieu Conjecture \ref{conj:mathieu} for $f$ a single matrix element
or a sum of two matrix elements. For the sum of three matrix elements there is a partial result.  
These considerations lead to Conjecture \ref{conj:nonzerointegralpowers}, and Theorem \ref{thm:alternativMC} shows that this conjecture implies
the Mathieu Conjecture \ref{conj:mathieu} for $SU(2)$. 
Conjecture \ref{conj:nonzerointegralpowers} describes the condition $\int_{SU(2)} f(g)^P\, dg=0$ for all $P>0$
in terms of a support condition on the characters of the abelian subgroup $U(1)$ of $SU(2)$ acting from the 
left and right on the 
individual matrix elements occurring in $f$. 

We note that the Mathieu Conjecture \ref{conj:mathieu} for bi-$K$-invariant functions is settled by Francoise et al. \cite[Cor.~4.1]{FranPYZ},
since the bi-$K$-invariant $SU(2)$-finite functions are the polynomials on $[-1,1]$. 

\section{$SU(2)$}\label{sec:SU2}

We briefly recall some required notions of $SU(2)$. Details can be found
in e.g. \cite{Vile}, \cite{VileK}.
Let $k(\phi) = \begin{pmatrix}
e^{\frac{i}{2}\phi} & 0 \\
0 & e^{-\frac{i}{2}\phi}
\end{pmatrix}$ and $a(\theta) =\begin{pmatrix}
\cos \frac{\theta}{2} & i \sin \frac{\theta}{2} \\
i \sin \frac{\theta}{2} & \cos \frac{\theta}{2}
\end{pmatrix}$ be elements of $SU(2)$, then any element $g\in\SU(2)$ can be expressed in terms
of Euler angles $g=k(\phi)a(\theta)k(\psi)$ with $\phi\in[0,2\pi)$, 
$\theta\in (0,\pi)$, $\psi \in [-2\pi,2\pi)$. In terms of the Euler angles the Haar integral is,
cf \cite[III, \S 6.1, (5)]{Vile},
\begin{equation}\label{eq:HaaronSU2}
\int_{SU(2)}f(g)\, dg=\frac{1}{16\pi^2}\int_0^{2\pi}\int_0^{\pi}\int_{-2\pi}^{2\pi} F(\phi,\theta,\psi) \sin\theta\,d\psi\,d\theta\, d\phi,
\end{equation}
where $F(\phi,\theta,\psi) =f\bigl(k(\phi)a(\theta)k(\psi)\bigr)$.
Denote the subgroup $K\cong U(1)$ generated by $k(\phi)$. For a function $f$ transforming by a non-trivial $K$-character 
under left- or right multiplication by $K$, we have  $\int_{SU(2)}f(g)\, dg=0$ by \eqref{eq:HaaronSU2}. 
The subgroup generated by $a(\theta)$ is the group $SO(2)$. 

The finite-dimensional irreducible representations are labeled by the spin $\ell\in \frac12\N$ and are of dimension $2\ell+1$. 
The standard basis for the representation space is labeled as $\{-\ell,-\ell+1, \dots, \ell\}$, and the corresponding matrix elements 
$t^\ell_{m,n}$ are $SU(2)$-finite functions, and any $SU(2)$-finite function is a finite linear combination of matrix elements of irreducible finite-dimensional representations. The matrix-elements $t^\ell_{m,n}$ 
behave well according to left and right action by $K$, cf. \cite[III, \S 3.3, (3)]{Vile}
\begin{equation}\label{eq:leftrighKbehaviourtlmn}
t^\ell_{m,n}(k(\phi)g) = e^{-im\phi} \, t^\ell_{m,n}(g), \qquad
t^\ell_{m,n}(gk(\psi)) = e^{-in\psi} \, t^\ell_{m,n}(g).
\end{equation}
In particular, $t^0_{0,0}(g)=1$, and the algebra of bi-$K$-invariant $SU(2)$-finite functions consists of 
finite linear combinations of $t^\ell_{0,0}$, $\ell\in\N$. 
For $\ell\in \N$ we have $t^\ell_{0,0}(a(\theta))= P_\ell(\cos\theta)$, cf. \cite[III, \S 3.9, (5)]{Vile}
where $P_\ell$
is the Legendre polynomial in its standard normalisation $P_\ell(1)=1$, \cite[\S 4.5]{Isma}, \cite[\S 1.8.3]{KoekS}, 
which is real-valued on $[-1,1]$. The Legendre polynomials are orthogonal on $[-1,1]$ with respect to the uniform measure;
$\int_{-1}^1 P_n(x)P_m(x) \, dx = \de_{n,m} 2/(2n+1)$. 
Moreover, the Schur orthogonality relations are, \cite[III, \S 6.2, (1)]{Vile}
\begin{equation} \label{eq:Schurortho}
\int_{SU(2)}t^{\ell_1}_{m,n}(g)\overline{t^{\ell_2}_{p,q}(g)}\,dg=\frac{1}{2\ell_1+1}
\de_{\ell_1,\ell_2}\de_{m,p}\de_{n,q},
\end{equation}
which in case $m=n=p=q=0$ give the orthogonality for the Legendre polynomials. 

\section{The Mathieu Conjecture for $SU(2)$ for simple $f$}\label{sec:MathieuConjforSU2small}

We start using some simple observations related to the condition in the 
Mathieu Conjecture \ref{conj:mathieu} for $G=SU(2)$. 
Firstly, by the Schur orthogonality relations \eqref{eq:Schurortho}
\begin{equation}\label{eq:integraltlnmis0}
\int_{SU(2)}t^\ell_{m,n}(g)\, dg\neq 0 \iff \ell=0.
\end{equation}
Secondly, by the left and right $K$-behaviour of the matrix elements \eqref{eq:leftrighKbehaviourtlmn}
and the Haar measure in Euler angles \eqref{eq:HaaronSU2} we see 
\begin{equation}\label{eq:integralproducttlnmisnon0}
\int_{SU(2)} \left(t^{\ell_1}_{m_1,n_1}\right)^{\al_1}(g) \cdots \left(t^{\ell_k}_{m_k,n_k}\right)^{\al_k}(g) 
\, dg \neq 0 \quad  \Longrightarrow\quad \sum_{i=1}^{k} \al_i m_i=0=\sum_{i=1}^{k} \al_in_i 
\end{equation}
for $\al_i \in \N$, 
$\ell_i \in \frac{1}{2}\N$ and $m_i,n_i \in \{-\ell_i,  \dots, \ell_i\}$. 

\begin{lem}\label{lem:1matrixeltpowerP}
$\int_{SU(2)} \left(t^\ell_{m,n}\right)^P(g)\, dg=0$ for all integer $P>0$ if and only if
$m \neq 0$ or $n \neq 0$.
\end{lem}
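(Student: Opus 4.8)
The plan is to prove the two implications separately, handling the ``only if'' part through its contrapositive.

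For the direction in which $m\neq 0$ or $n\neq 0$, the vanishing is immediate. By \eqref{eq:leftrighKbehaviourtlmn} the function $\left(t^\ell_{m,n}\right)^P$ transforms under left multiplication by $k(\phi)$ through the character $e^{-iPm\phi}$ and under right multiplication by $k(\psi)$ through the character $e^{-iPn\psi}$. For $P>0$ at least one of $Pm$, $Pn$ is nonzero, so $\left(t^\ell_{m,n}\right)^P$ transforms according to a non-trivial $K$-character and hence $\int_{SU(2)}\left(t^\ell_{m,n}\right)^P(g)\,dg=0$ by the remark following \eqref{eq:HaaronSU2}; equivalently, one applies the contrapositive of \eqref{eq:integralproducttlnmisnon0} with $k=1$ and $\al_1=P$.

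For the converse, assume $m=n=0$; I will exhibit a single $P>0$ for which the integral is nonzero. Since $m\in\{-\ell,\dots,\ell\}$ with integer spacing, $m=0$ forces $\ell\in\N$, so the identification $t^\ell_{0,0}(a(\theta))=P_\ell(\cos\theta)$ and the Legendre machinery are available. Because $t^\ell_{0,0}$ is bi-$K$-invariant and $P_\ell(\cos\theta)$ is real-valued, $t^\ell_{0,0}$ is a real-valued function on $SU(2)$, so $\left(t^\ell_{0,0}\right)^2=t^\ell_{0,0}\,\overline{t^\ell_{0,0}}$ and the Schur orthogonality relations \eqref{eq:Schurortho} give $\int_{SU(2)}\left(t^\ell_{0,0}\right)^2(g)\,dg=1/(2\ell+1)\neq 0$, so $P=2$ works. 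Alternatively, and more transparently, one substitutes $F(\phi,\theta,\psi)=P_\ell(\cos\theta)^P$ into \eqref{eq:HaaronSU2}, performs the trivial integrations over $\phi$ and $\psi$, and sets $x=\cos\theta$ to obtain $\int_{SU(2)}\left(t^\ell_{0,0}\right)^P(g)\,dg=\tfrac12\int_{-1}^1 P_\ell(x)^P\,dx$, which for $P=2$ equals $1/(2\ell+1)$ by the stated orthogonality of the Legendre polynomials.

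There is essentially no serious obstacle: the only points needing a moment's care are recording that $m=n=0$ forces $\ell\in\N$ (so that the Legendre identity applies) and noting the reality of $t^\ell_{0,0}$ if one wishes to quote \eqref{eq:Schurortho} directly rather than evaluate the elementary integral. It is worth remarking that the argument proves slightly more than is asked: when $m=n=0$ one has $\int_{SU(2)}\left(t^\ell_{0,0}\right)^{2N}(g)\,dg=\tfrac12\int_{-1}^1 P_\ell(x)^{2N}\,dx>0$ for every $N\geq 1$, so the vanishing condition fails for an infinite set of exponents $P$, not merely for a single one.
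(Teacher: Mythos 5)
Your proposal is correct and follows essentially the same route as the paper: the vanishing direction via the $K$-character behaviour (equivalently \eqref{eq:integralproducttlnmisnon0}), and the converse by reducing $\int_{SU(2)}\bigl(t^\ell_{0,0}\bigr)^2(g)\,dg$ to $\tfrac12\int_{-1}^1 P_\ell(x)^2\,dx>0$. The extra observations (that $m=n=0$ forces $\ell\in\N$, and the explicit value $1/(2\ell+1)$) are harmless refinements of the paper's argument.
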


\begin{proof} The implication $\Leftarrow$ follows from \eqref{eq:integralproducttlnmisnon0}. To prove
the other implication, we observe that for $\ell\in\N$
\[
\int_{SU(2)} \left(t^\ell_{0,0}(g)\right)^2\, dg = \frac12 \int_0^\pi \left( P_\ell(\cos\theta)\right)^2\, \sin\theta \, 
d\theta = \frac12 \int_{-1}^1 \left( P_\ell(x)\right)^2\, 
dx > 0. 
\qedhere
\]
\end{proof}

Now we can verify the Mathieu Conjecture \ref{conj:mathieu} in the case $f$ consists of one 
matrix element.

\begin{prop}\label{prop:MCtrueforsinglematrixelt} 
The Mathieu Conjecture \ref{conj:mathieu} is true for $G=SU(2)$ with $f$  a single matrix element 
$f=t^{\ell}_{m,n}$. 
\end{prop}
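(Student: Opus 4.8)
The plan is to read off the conclusion from the hypothesis by combining Lemma~\ref{lem:1matrixeltpowerP} with the left-right $K$-equivariance \eqref{eq:leftrighKbehaviourtlmn} of a single matrix element, the point being that this equivariance survives taking powers with the $K$-weight scaling linearly in $P$.

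First I would dispose of the trivial case. If the hypothesis of Conjecture~\ref{conj:mathieu} fails for $f=t^\ell_{m,n}$, there is nothing to prove, so assume $\int_{SU(2)}(t^\ell_{m,n})^P(g)\,dg=0$ for all integer $P>0$. By Lemma~\ref{lem:1matrixeltpowerP} this forces $m\neq 0$ or $n\neq 0$; since $m$ and $n$ are simultaneously integers or simultaneously half-integers, $(m,n)\neq(0,0)$ gives $\max(|m|,|n|)\geq\frac12$.

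Next, fix an arbitrary $SU(2)$-finite function $h$. By the Peter-Weyl theorem write $h=\sum_{j=1}^N c_j\, t^{\ell_j}_{m_j,n_j}$ as a finite linear combination of matrix elements, and set $L=\max_{1\leq j\leq N}\ell_j$. By linearity of the Haar integral it suffices to show that $\int_{SU(2)}(t^\ell_{m,n})^P(g)\,t^{\ell_j}_{m_j,n_j}(g)\,dg=0$ for each $j$ once $P$ is large enough. Applying \eqref{eq:leftrighKbehaviourtlmn}, the product $(t^\ell_{m,n})^P\,t^{\ell_j}_{m_j,n_j}$ transforms under left multiplication by $k(\phi)$ through the character $e^{-i(Pm+m_j)\phi}$ and under right multiplication by $k(\psi)$ through $e^{-i(Pn+n_j)\psi}$; hence, by the remark following \eqref{eq:HaaronSU2}, equivalently by \eqref{eq:integralproducttlnmisnon0} with $k=2$, $\alpha_1=P$, $\alpha_2=1$, the integral vanishes unless $Pm+m_j=0$ and $Pn+n_j=0$. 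But $|m_j|\leq\ell_j\leq L$ and $|n_j|\leq\ell_j\leq L$, so these equalities force $P\,\max(|m|,|n|)\leq L$, i.e. $P\leq L/\max(|m|,|n|)\leq 2L$. Therefore for every integer $P>2L$ the integral $\int_{SU(2)}(t^\ell_{m,n})^P(g)\,h(g)\,dg$ vanishes, which is exactly the conclusion of Conjecture~\ref{conj:mathieu} for $f=t^\ell_{m,n}$.

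There is no real obstacle here; the mechanism worth isolating is that the $K$-weights $(Pm,Pn)$ of $(t^\ell_{m,n})^P$ grow linearly in $P$, whereas the $K$-weights occurring in the fixed function $h$ range over a bounded set, so a match is possible only for finitely many $P$. This is precisely the feature that makes a single matrix element easy, and it is what breaks down for a sum of several matrix elements, where the individual weights can cancel against one another; handling that case will require a substitute for this simple linear-growth argument.
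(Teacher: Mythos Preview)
Your proof is correct and follows essentially the same route as the paper's: invoke Lemma~\ref{lem:1matrixeltpowerP} to get $m\neq 0$ or $n\neq 0$, then use the $K$-equivariance \eqref{eq:leftrighKbehaviourtlmn} (equivalently \eqref{eq:integralproducttlnmisnon0}) to conclude that $\int_{SU(2)}(t^\ell_{m,n})^P\,t^{\ell_j}_{m_j,n_j}\,dg$ vanishes once $P$ is large. The only cosmetic difference is that the paper assumes without loss of generality $m\neq 0$ and gives the bound $P>|a|/|m|$ for a single $h=t^{\ell_0}_{a,b}$, whereas you package everything into a uniform bound $P>2L$ via $\max(|m|,|n|)\geq\tfrac12$.
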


\begin{proof} Since all non-negative powers of $f$ integrate to zero, Lemma \ref{lem:1matrixeltpowerP} shows that
$m\not=0$ or $n\not=0$, so in particular $\ell\not=0$. Let $h=t^{\ell_0}_{a,b}$. 
We assume $m\not=0$, the case $n\not=0$ being similar. 
By \eqref{eq:integralproducttlnmisnon0} we see that $Pm+a\not=0$ implies 
$\int_{SU(2)}\bigl(f(g)\bigr)^P h(g)\, dg=0$, which is the case for $P> |a|/|m|$. 
\end{proof}

The same strategy can also be employed to deal with $f= A_1 t^{\ell_1}_{m_1,n_1}+ A_2 t^{\ell_2}_{m_2,n_2}$, where
$A_i\in\C$, assuming $A_1\not=0\not=A_2$ and $(\ell_1,m_1,n_1)\not=(\ell_2,m_2,n_2)$. Note 
\begin{equation}\label{eq:poweroff=2sum}
\int_{SU(2)}\bigl(f(g)\bigr)^P\, dg = 
\sum_{\al=0}^P \binom{P}{\al} A_1^\al A_2^{P-\al} \int_{SU(2)} \bigl(t^{\ell_1}_{m_1,n_1}\bigr)^{\al}(g) 
\bigl(t^{\ell_2}_{m_2,n_2}\bigr)^{P-\al}(g)\, dg.
\end{equation}

\begin{lem}\label{lem:MCfortwomatrixelt}
Let $f$ be as above with at least one of $(m_1,m_2,n_1,n_2)$ non-zero, then
\[
\exists P>0:\quad \int_{SU(2)}\bigl(f(g)\bigr)^P\, dg\not= 0 \quad\Longleftrightarrow \quad 
\det\begin{pmatrix} m_1& m_2 \\n_1 & n_2\end{pmatrix} = 0 
\wedge m_1m_2 \leq 0 \wedge n_1n_2 \leq 0.
\]
\end{lem}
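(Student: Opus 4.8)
The plan is to analyze when the binomial sum in \eqref{eq:poweroff=2sum} can fail to vanish. Each term in that sum is a multiple of $\int_{SU(2)} \bigl(t^{\ell_1}_{m_1,n_1}\bigr)^{\al} \bigl(t^{\ell_2}_{m_2,n_2}\bigr)^{P-\al}\, dg$, and by \eqref{eq:integralproducttlnmisnon0} such an integral can be nonzero only if $\al m_1 + (P-\al)m_2 = 0$ and $\al n_1 + (P-\al)n_2 = 0$ simultaneously. So I would first observe that for fixed $P$, there is at most one value of $\al\in\{0,\dots,P\}$ satisfying the first equation (since $m_1\neq m_2$ unless both are zero, and the case where the $K$-data coincide is excluded or handled separately), hence no cancellation between distinct terms is possible: $\int_{SU(2)}(f(g))^P\,dg\neq 0$ for some $P$ if and only if there exist $P>0$ and $0\le\al\le P$ with $\al m_1+(P-\al)m_2=0$, $\al n_1+(P-\al)n_2=0$, and the corresponding pure integral nonzero.

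For the forward direction, suppose such $P,\al$ exist. The two linear relations $\al m_1 = -(P-\al)m_2$ and $\al n_1 = -(P-\al)n_2$ with $\al,(P-\al)\ge 0$ and not both zero force, first, that the vectors $(m_1,n_1)$ and $(m_2,n_2)$ are linearly dependent, i.e.\ the determinant vanishes; and second, reading signs, that $m_1$ and $m_2$ have opposite signs (or one is zero), giving $m_1m_2\le 0$, and similarly $n_1n_2\le 0$. (Here I would treat the degenerate subcases $\al=0$ or $\al=P$ carefully: then one of $m_i,n_i$ pairs must itself be zero, which still satisfies the stated conditions since the surviving term is $\int (t^{\ell_i}_{0,0})^P$ and we invoke Lemma \ref{lem:1matrixeltpowerP} / the positivity in Lemma \ref{lem:1matrixeltpowerP}'s proof.) This gives the $\Rightarrow$ implication, modulo checking the pure integral is actually nonzero, which is the subtle point addressed below.

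For the converse, assume $\det = 0$, $m_1m_2\le 0$, $n_1n_2\le 0$, with at least one entry nonzero. The rank-one condition lets me write $(m_2,n_2) = c(m_1,n_1)$ for some rational $c$ (after handling the case where one vector is zero), and the sign conditions force $c\le 0$. Writing $c = -q/p$ in lowest terms with $p,q\ge 0$ not both zero, the choice $\al = p$, $P-\al = q$ (scaled to clear denominators, and if one vector vanishes, any $P$) solves both linear equations. It remains to show the surviving integral $\int_{SU(2)}\bigl(t^{\ell_1}_{m_1,n_1}\bigr)^{\al}\bigl(t^{\ell_2}_{m_2,n_2}\bigr)^{P-\al}\,dg$ is genuinely nonzero for some admissible $P$ — this is the main obstacle. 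The $K$-selection rule only rules integrals out; it does not certify nonvanishing. I expect to argue this by writing the matrix elements explicitly in Euler angles, $t^{\ell}_{m,n}(k(\phi)a(\theta)k(\psi)) = e^{-im\phi}d^\ell_{m,n}(\theta)e^{-in\psi}$ with $d^\ell_{m,n}$ the Wigner $d$-function, so that after the $\phi$ and $\psi$ integrations (which now contribute a nonzero constant because the $K$-characters cancel) the problem reduces to showing $\int_0^\pi \bigl(d^{\ell_1}_{m_1,n_1}(\theta)\bigr)^{\al}\bigl(d^{\ell_2}_{m_2,n_2}(\theta)\bigr)^{P-\al}\sin\theta\,d\theta \neq 0$ for suitably chosen $P$. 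Since $d^\ell_{m,n}(\theta)$ is, up to a nonzero factor and a power of $\sin(\theta/2)$ and $\cos(\theta/2)$, a Jacobi polynomial in $\cos\theta$ that does not vanish identically, one can always scale $P$ up by a common factor if necessary and use that a product of finitely many real-analytic functions, each nonvanishing on a set of full measure, is nonzero on a set of positive measure — but its integral could still accidentally vanish, so I would instead pick the smallest admissible $P$ (namely $P=p+q$ after clearing denominators) and verify positivity or nonvanishing directly from the explicit leading-order behaviour of $d^\ell_{m,n}$ near $\theta = 0$ or $\theta = \pi$, where exactly one of the two factors is nonzero to lowest order, making the integrand's dominant contribution sign-definite. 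Closing this nonvanishing step cleanly is where the real work lies; everything else is bookkeeping with the selection rule and sign analysis.
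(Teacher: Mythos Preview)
Your forward direction and the reduction to a single surviving term in \eqref{eq:poweroff=2sum} are correct and match the paper. The gap is exactly where you flag it: certifying that the surviving integral
\[
\int_{SU(2)} \bigl(t^{\ell_1}_{m_1,n_1}\bigr)^{\al}\bigl(t^{\ell_2}_{m_2,n_2}\bigr)^{\be}\,dg
\]
is nonzero for some admissible $(\al,\be)$. Your proposed endpoint analysis of the Wigner $d$-functions does not close this. Near $\theta=0$ or $\theta=\pi$ the integrand vanishes to some positive order, so those regions contribute negligibly to the integral; the assertion that ``exactly one of the two factors is nonzero to lowest order'' has no clear meaning (both $d^{\ell_i}_{m_i,n_i}$ are nonzero analytic functions on $(0,\pi)$), and nothing in that local picture forces the integral over all of $[0,\pi]$ to be sign-definite.

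The paper's fix is short and avoids all asymptotics. One first notes that $t^\ell_{m,n}(a(\theta))$ is real-valued, so the bi-$K$-invariant integrand, restricted to the $a(\theta)$ integration that remains after the $\phi,\psi$ integrals, is a real function of $\theta$. If the integral at the minimal $(\al,\be)$ happens to vanish, take $P=2(\al+\be)$: by the same one-dimensional-kernel argument the only surviving term corresponds to $(2\al,2\be)$, and now the integrand is the \emph{square} of that real, not-identically-zero function, so its integral is strictly positive. This squaring trick is the missing idea and replaces your attempted leading-order argument entirely.
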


\begin{rem}\label{rmk:supportfor2matrixelt} Note that the condition in Lemma \ref{lem:MCfortwomatrixelt} means 
that $(0,0)$ is on the line segment from $(m_1,n_1)$ to $(m_2,n_2)$. 
\end{rem}

\begin{proof}
$\Rightarrow$: Since at least one term in the right hand side of \eqref{eq:poweroff=2sum} has to be non-zero, \eqref{eq:integralproducttlnmisnon0} 
shows that $m_1\alpha+m_2(P-\al)=0=n_1\alpha+n_2(P-\al)$, which gives the result.

$\Leftarrow$: Note that $\dim \text{Ker} \begin{pmatrix} m_1& m_2 \\n_1 & n_2\end{pmatrix} =1$. 
Pick a solution $(\al,\be)\in \N^2$ to $m_1\al+m_2\be=0=n_1\al+n_2\be$, and put $M=\al+\be$. 
Then 
\begin{equation}\label{eq:powerMintegralsingleterm}
\int_{SU(2)}\bigl(f(g)\bigr)^M\, dg \, = \, \binom{\al+\be}{\al} A_1^\al A_2^\be 
\int_{SU(2)} \bigl(t^{\ell_1}_{m_1,n_1}\bigr)^{\al}(g) \bigl(t^{\ell_2}_{m_2,n_2}\bigr)^{\be}(g)\, dg,
\end{equation}
using \eqref{eq:integralproducttlnmisnon0}, since for $\ga\not=0$
\[
\begin{pmatrix} m_1& m_2 \\n_1 & n_2\end{pmatrix} \begin{pmatrix} \al+\ga \\ \be-\ga\end{pmatrix} 
=  \ga \begin{pmatrix} m_1& m_2 \\n_1 & n_2\end{pmatrix} \begin{pmatrix} 1 \\ -1 \end{pmatrix}
\not= \begin{pmatrix} 0 \\ 0\end{pmatrix}
\]
since the kernel is one-dimensional. 
The integrand on the right hand side of \eqref{eq:powerMintegralsingleterm} is a bi-$K$-invariant function, 
so that by \eqref{eq:HaaronSU2} we can restrict to the integral over $g=a(\theta)$, $\theta\in [0,\pi]$. 
By \cite[III, \S 3,(3),(4)]{Vile} the integrand in $a(\theta)$ is real-valued.
In case the integral is non-zero we are done. 
Otherwise, we put $P=2M$, and then in the same way there is again at most one non-zero integral in 
the right hand side of \eqref{eq:poweroff=2sum}, namely for $(2\al,2\be)$. The integral can be restricted to $SO(2)$
as before. 
Since this is the integral of a square, since the function 
$\bigl(t^{\ell_1}_{m_1,n_1}\bigr)^{\al}(a(\theta)) \bigl(t^{\ell_2}_{m_2,n_2}\bigr)^{\be}(a(\theta))$ is real, 
the integral is non-zero. 
\end{proof}

\begin{prop}\label{prop:MCtruefortwomatrixelt} 
The Mathieu Conjecture \ref{conj:mathieu} is true for $G=SU(2)$ with $f$ a sum of two matrix element 
$f= A_1 t^{\ell_1}_{m_1,n_1}+ A_2 t^{\ell_2}_{m_2,n_2}$, where
$A_1\not=0\not=A_2$ and $(\ell_1,m_1,n_1)\not=(\ell_2,m_2,n_2)$. 
\end{prop}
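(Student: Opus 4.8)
The plan is to split into two cases according to whether the condition of Lemma~\ref{lem:MCfortwomatrixelt} holds, which is the dichotomy governing when $f$ has all powers integrating to zero. If the condition \emph{fails}, then by Lemma~\ref{lem:MCfortwomatrixelt} there exists $P_0>0$ with $\int_{SU(2)}\bigl(f(g)\bigr)^{P_0}\,dg\neq 0$, so the hypothesis of the Mathieu Conjecture is not satisfied and the statement holds vacuously. (One should also dispose of the trivial subcase $m_1=m_2=n_1=n_2=0$: then $f$ is bi-$K$-invariant, hence a polynomial in $t^{\ell}_{0,0}$ restricted to $SO(2)$, i.e.\ essentially a polynomial on $[-1,1]$, and the Mathieu Conjecture for such $f$ follows from the Duistermaat--Van der Kallen result, or alternatively from Francoise et al.\ as cited in Section~\ref{sec:intro}.) So the real content is the case where the condition of Lemma~\ref{lem:MCfortwomatrixelt} holds, i.e.\ $(0,0)$ lies on the segment from $(m_1,n_1)$ to $(m_2,n_2)$, with all powers of $f$ integrating to zero.

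In that case I would mimic the proof of Proposition~\ref{prop:MCtrueforsinglematrixelt}. Write $h=t^{\ell_0}_{a,b}$ (by linearity in $h$ it suffices to treat a single matrix element). Expand, as in \eqref{eq:poweroff=2sum},
\[
\int_{SU(2)}\bigl(f(g)\bigr)^P h(g)\,dg
=\sum_{\al=0}^{P}\binom{P}{\al}A_1^{\al}A_2^{P-\al}
\int_{SU(2)}\bigl(t^{\ell_1}_{m_1,n_1}\bigr)^{\al}(g)\bigl(t^{\ell_2}_{m_2,n_2}\bigr)^{P-\al}(g)\,t^{\ell_0}_{a,b}(g)\,dg,
\]
and apply \eqref{eq:integralproducttlnmisnon0}: the $\al$-th term vanishes unless
\[
m_1\al+m_2(P-\al)+a=0\quad\text{and}\quad n_1\al+n_2(P-\al)+b=0.
\]
Now I use the structural fact that $(m_1,n_1)$ and $(m_2,n_2)$ are antiparallel: there is a primitive vector $(u,v)\in\Z^2$ and integers $r_1>0$, $r_2<0$ (or one of them zero, handled separately) with $(m_i,n_i)=r_i(u,v)$. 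Then the two linear conditions become the single condition $\bigl(r_1\al+r_2(P-\al)\bigr)(u,v)=-(a,b)$; if $(a,b)$ is not a scalar multiple of $(u,v)$ no term survives for any $P$, and otherwise, writing $(a,b)=c(u,v)$, the surviving $\al$ must satisfy $r_1\al+r_2(P-\al)=-c$, an affine-linear equation in $\al$ with fixed nonzero coefficient $r_1-r_2$ on $\al$. The key point: as $P$ grows, the value $\al$ forced by this equation eventually leaves the admissible range $0\le\al\le P$, \emph{unless} the coefficient of $P$ in the solution lands it inside $[0,P]$ — so I must check that for $P$ large the unique candidate $\al$ fails either $\al\ge 0$ or $\al\le P$. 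Concretely $r_1\al-r_2\al=-c-r_2P$, so $\al=(-c-r_2P)/(r_1-r_2)$; since $r_1>0>r_2$ we have $r_1-r_2>0$ and $-r_2>0$, hence $\al\sim (-r_2/(r_1-r_2))P$ with $0<-r_2/(r_1-r_2)<1$, so $\al$ stays in range for all $P$. This means a single term may persist — so a purely "support" argument does \emph{not} finish the proof, and this is the main obstacle.

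To overcome it, I would extract more from the hypothesis that \emph{all} powers of $f$ integrate to zero. Applying the $h=1$ computation: for every $P$, the (at most one) surviving term in $\int_{SU(2)}\bigl(f(g)\bigr)^P\,dg$ must vanish. By the argument in the $\Leftarrow$ direction of Lemma~\ref{lem:MCfortwomatrixelt}, that surviving integral equals $\binom{\al_P+\be_P}{\al_P}A_1^{\al_P}A_2^{\be_P}$ times $\int_{-1}^{1}(\text{something real})$; the proof of that lemma shows that for $P=2M$ (with $M$ the minimal exchange exponent) the integrand is a genuine square of a real function, so the integral is \emph{strictly positive} — contradiction with the hypothesis that all powers integrate to zero. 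Hence, in the case where the condition of Lemma~\ref{lem:MCfortwomatrixelt} holds, the hypothesis of the Mathieu Conjecture can only be met if there is in fact \emph{no} surviving term for any $P$, i.e.\ the support conditions already force $\int\bigl(f(g)\bigr)^P\,dg$ to be an empty sum. But then the same support bookkeeping applied to $\int\bigl(f(g)\bigr)^P h(g)\,dg$ shows: the surviving $\al$ (if any) is pinned at $\al_P=\al_P^0+$ (integer depending on $a,b,P$) shifted by a bounded amount from the forbidden locus, so for $P$ large it too lies outside $[0,P]$ and the integral vanishes. I would organize the final write-up by first establishing the positivity/contradiction step to reduce to "no surviving power-of-$f$ term ever", then running the elementary affine-linear range estimate on $\al$ for the $h$-twisted integral to conclude $\int_{SU(2)}\bigl(f(g)\bigr)^P h(g)\,dg=0$ for $P\gg 0$. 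The one delicate bookkeeping point to get right is the degenerate subcases where $r_1=0$ or $r_2=0$ (one of the two matrix elements is itself bi-$K$-invariant), which I would reduce directly to Proposition~\ref{prop:MCtrueforsinglematrixelt} applied to the other summand together with the abelian/polynomial case.
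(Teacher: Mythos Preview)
You have read Lemma~\ref{lem:MCfortwomatrixelt} backwards, and this derails the whole argument. The lemma says that \emph{some} power of $f$ has nonzero integral if and only if $(0,0)$ lies on the segment from $(m_1,n_1)$ to $(m_2,n_2)$. Consequently, the Mathieu hypothesis ``all powers integrate to zero'' is satisfied (in the non-bi-$K$-invariant regime) precisely when $(0,0)$ is \emph{not} on that segment. Your ``real content'' case --- $(0,0)$ on the segment with all powers integrating to zero --- is therefore empty; the contradiction you eventually reach via the positivity argument is nothing more than a rediscovery of the $\Leftarrow$ direction of the lemma. Conversely, the case you dismiss as vacuous is exactly the one that carries all the content, and you never treat it.

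Your antiparallel parametrisation $(m_i,n_i)=r_i(u,v)$ is tailored to the wrong case and does not apply when $(0,0)$ is off the segment: there the vectors $(m_1,n_1)$ and $(m_2,n_2)$ need not be collinear at all. The paper's proof handles the correct case by splitting into the three alternatives forced by the negation of the lemma's right-hand side: either $\det\begin{pmatrix} m_1 & m_2\\ n_1 & n_2\end{pmatrix}\neq 0$, or $m_1m_2>0$, or $n_1n_2>0$. In the first subcase the pair of linear equations $m_1\al+m_2\be=-a$, $n_1\al+n_2\be=-b$ has a unique rational solution $(\al_0,\be_0)$, so for $P>\al_0+\be_0$ no term survives. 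In the second subcase the single equation $m_1\al+m_2\be=-a$ with $m_1,m_2$ of the same sign has no solution in $\N^2$ with $\al+\be=P$ once $P>|a|/\min(|m_1|,|m_2|)$; the third is analogous. The bi-$K$-invariant case $m_1=m_2=n_1=n_2=0$ is disposed of, as you note, by Boyarchenko's result (cited via \cite{FranPYZ}), which shows the hypothesis is never satisfied there.
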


\begin{proof} It suffices to take $h=t^{\ell}_{a,b}$ and to assume that $\int_{SU(2)} (f(g))^P\, dg=0$ for all $P>0$. 
We need to show that $\int_{SU(2)} (f(g))^P t^\ell_{a,b}(g)\, dg$ vanishes for sufficiently large $P$.

First assume that not all of $m_i$'s and $n_i$'s are zero, then by Lemma \ref{lem:MCfortwomatrixelt}
we have $m_1m_2>0$ or $n_1n_2>0$ or $\det\begin{pmatrix} m_1& m_2 \\n_1 & n_2\end{pmatrix} \not= 0$. 
Consider the last
case, then by \eqref{eq:integralproducttlnmisnon0}, \eqref{eq:poweroff=2sum} 
we see that $\int_{SU(2)} (f(g))^P t^\ell_{a,b}(g)\, dg$ 
can only be non-zero if 
\[
m_1\al+ m_2\be = -a, \quad n_1\al+n_2\be =-b, \qquad \al+\be=P, \  \al,\be \in\N.
\]
The first two equations have a unique solution $(\al_0,\be_0)\in \Q^2$. In case 
$(\al_0,\be_0)\in\N^2$, we see that for all $P>\al_0+\be_0$ the integral is zero.
In case $m_1m_2>0$, we consider $m_1\al+m_2\be+a=0$. In case $\sgn(m_1)=\sgn(a)$, we have no
solution $(\al,\be)\in \N^2$, 
so that integral is zero using \eqref{eq:integralproducttlnmisnon0}, \eqref{eq:poweroff=2sum}. 
In case $\sgn(m_1)=-\sgn(a)$, we see that
the integral is zero for $P> |a|/\min(|m_1|, |m_2|)$.  The case $n_1n_2>0$ is dealt with analogously.

In case $m_1=m_2=n_1=n_2=0$, $f$ is a bi-$K$-invariant function, and 
\[
\int_{SU(2)} (f(g))^P\, dg = \frac12 \int_{-1}^1 \bigl( A_1P_{\ell_1}(x) + A_2P_{\ell_2}(x)\bigr)^P\, dx. 
\]
By Boyarchenko's result, see \cite[Cor.~4.1]{FranPYZ}, there is no polynomial $f$ 
such that $\int_{-1}^1 (f(x))^P\, dx=0$ for all $P>0$, so the 
Mathieu Conjecture \ref{conj:mathieu} is trivially valid in this case.
\end{proof}

The fact that at most one term in the binomial expansion leads to a non-zero
integral is typical for $f$ a linear combination of two matrix elements. For a combination of
three matrix-elements it gets more complicated.

\begin{prop}\label{prop:MCfor3matrixelements}
Let $f= \sum_{i=1}^3 A_i t^{\ell_i}_{m_i,n_i}$ with $A_i\not=0$ for all $i$ and let $(\ell_i,m_i,n_i)$ be mutually different.
Assume that $M= \begin{pmatrix}
1 & 1 & 1 \\
m_1 & m_2 & m_3 \\
n_1 & n_2 & n_3
\end{pmatrix}$ has $\text{\rm rank}(M)\not=2$. Then the Mathieu Conjecture \ref{conj:mathieu} is valid for $f$.
\end{prop}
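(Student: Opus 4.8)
The plan is to split according to the value of $\operatorname{rank}(M)$, which under our hypothesis is either $3$ or $1$ (it is never $0$ since the first row is $(1,1,1)$).

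\textbf{Case $\operatorname{rank}(M)=3$.} Here the strategy of Propositions \ref{prop:MCtrueforsinglematrixelt} and \ref{prop:MCtruefortwomatrixelt} applies essentially verbatim. Expanding $(f(g))^P$ by the multinomial theorem and testing against $h=t^{\ell}_{a,b}$, a term indexed by $(\al_1,\al_2,\al_3)\in\N^3$ with $\al_1+\al_2+\al_3=P$ contributes a non-zero integral only if, by \eqref{eq:integralproducttlnmisnon0},
\[
\sum_i \al_i m_i = -a,\qquad \sum_i \al_i n_i = -b,\qquad \al_1+\al_2+\al_3 = P.
\]
Since $M$ is invertible, these three linear equations determine $(\al_1,\al_2,\al_3)$ uniquely as a function of $P$; in fact the condition $\al_1+\al_2+\al_3=P$ together with the other two equations has at most one solution in $\Q^3$ for each $P$, and $M^{-1}(P,-a,-b)^{t}$ has entries that are affine in $P$. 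Hence for each $P$ at most one multinomial term survives, and requiring all these entries to be non-negative integers is a finite set of constraints: either it fails for all large $P$ (so the integral against $h$ vanishes for $P\gg 0$ and we are done), or it holds for infinitely many $P$. In the latter situation, taking $a=b=0$ (i.e. $h=1$) shows there is a genuine solution $(\al_1,\al_2,\al_3)\in\N^3$ with $\sum\al_i m_i=\sum\al_i n_i=0$; but then, exactly as in the proof of Lemma \ref{lem:MCfortwomatrixelt}, the surviving term is the integral over $SO(2)$ of a real-valued bi-$K$-invariant function, and its square (obtained by doubling $P$) integrates to something strictly positive, contradicting $\int_{SU(2)}(f(g))^P\,dg=0$ for all $P>0$. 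So the hypothesis $\int_{SU(2)}(f(g))^P\,dg=0$ forces the first alternative, and the Mathieu Conjecture holds.

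\textbf{Case $\operatorname{rank}(M)=1$.} Then all columns of $M$ are equal, so $(m_1,n_1)=(m_2,n_2)=(m_3,n_3)=:(m,n)$, while the spins $\ell_i$ are forced to be distinct. If $(m,n)\neq(0,0)$ then $t^{\ell_i}_{m,n}(k(\phi)gk(\psi))=e^{-im\phi-in\psi}t^{\ell_i}_{m,n}(g)$ shows $(f(g))^P$ transforms by the character $(P m, P n)\neq(0,0)$ under $K\times K$, so $\int_{SU(2)}(f(g))^P\,dg=0$ automatically and likewise $\int_{SU(2)}(f(g))^P t^{\ell}_{a,b}(g)\,dg=0$ unless $Pm+a=0=Pn+b$, which holds for at most one $P$; hence the conclusion for $P\gg 0$. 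If $(m,n)=(0,0)$ then $f$ is bi-$K$-invariant, $f(a(\theta))=\sum_i A_i P_{\ell_i}(\cos\theta)$ is a non-constant polynomial (the $\ell_i$ are distinct), and by Boyarchenko's result \cite[Cor.~4.1]{FranPYZ} the hypothesis $\int_{-1}^1 f(a(\theta))^P\,\tfrac12\,d(\cos\theta)=0$ for all $P>0$ is impossible, so the Mathieu Conjecture holds vacuously.

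The main obstacle is the bookkeeping in the rank-$3$ case: one must argue cleanly that the affine-in-$P$ solution $(\al_1(P),\al_2(P),\al_3(P))$ of the linear system lands in $\N^3$ for only finitely many $P$ \emph{unless} it does so along an arithmetic progression that also produces a genuine vanishing obstruction for the powers of $f$ itself; phrasing this dichotomy so that it really reduces to the already-proven positivity-of-a-square argument (rather than requiring new input) is the delicate point. Everything else is a direct adaptation of the two-matrix-element case together with the $K\times K$-character argument and the invocation of Boyarchenko's theorem.
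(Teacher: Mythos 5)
Your proposal is correct and follows essentially the same approach as the paper: the rank-$1$ case is identical, and your rank-$3$ dichotomy is just the contrapositive of the paper's argument (the paper computes $M^{-1}$ explicitly, notes that if $M^{-1}(P,0,0)^t\in\N^3$ for some $P$ then $\int_{SU(2)}f^{2P}\neq 0$ by the square-on-$SO(2)$ argument, and otherwise uses the negativity of one cofactor ratio to kill the integral against $h$ for large $P$). The "delicate point" you flag does close in one line: if the affine-in-$P$ solution $M^{-1}(P,-a,-b)^t$ lies in $\N^3$ for arbitrarily large $P$, its linear coefficient $M^{-1}(1,0,0)^t$ must be componentwise non-negative, and since it is rational with entries summing to $1$ (first row of $M$ being $(1,1,1)$), a suitable multiple $P$ gives a genuine solution with $a=b=0$, which is exactly the paper's non-vanishing case.
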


\begin{proof} The analogue of \eqref{eq:poweroff=2sum} is the trinomial expansion 
\begin{equation}\label{eq:poweroff=3sum}
\int_{SU(2)} f^{P}(g) \,\mathrm{d}g = \sum_{\stackrel{\scriptstyle{\alpha_1+\alpha_2+\alpha_3=P}}{\scriptstyle{\al_i\in \N}}} 
\binom{P}{\alpha_1,\alpha_2,\alpha_3} \prod_{i=1}^3 A_i^{\alpha_i} 
\int_{SU(2)} \prod_{i=1}^3 \left(t_{m_in_i}^{l_i}\right)^{\alpha_i}(g) \, dg.
\end{equation}
As before, it suffices to consider the case $h=t^\ell_{a,b}$. 
We have to consider the cases $\text{\rm rank}(M)=1$ and $\text{\rm rank}(M)=3$. In the first case 
$m_i=m$ and $n_i=n$ for all $i$, and the integral in \eqref{eq:poweroff=3sum} is zero if $m\not=0$ or $n\not=0$ 
by \eqref{eq:integralproducttlnmisnon0}. In case $m\not=0$ we see that 
\begin{equation}\label{eq:poweroff=3sumh}
\int_{SU(2)} f^{P}(g) t^\ell_{a,b}(g) \,\mathrm{d}g = \sum_{\stackrel{\scriptstyle{\alpha_1+\alpha_2+\alpha_3=P}}{\scriptstyle{\al_i\in \N}}} 
\binom{P}{\alpha_1,\alpha_2,\alpha_3} \prod_{i=1}^3 A_i^{\alpha_i} 
\int_{SU(2)} \prod_{i=1}^3 \left(t_{m_in_i}^{l_i}\right)^{\alpha_i}(g)t^\ell_{a,b}(g) \, dg
\end{equation}
can only be non-zero if $Pm+a=0$, so that for $P>|a|/|m|$ the integral is zero. The case $n\not=0$ is
analogous. In case $m=n=0$, we see that the condition in the 
Mathieu Conjecture is not valid using \cite[Cor.~4.1]{FranPYZ} as in the proof of Proposition \ref{prop:MCtruefortwomatrixelt}.

In case $\text{\rm rank}(M)=3$, $M$ is invertible with $M^{-1}$ having rational entries. In particular, for each $P\in\N$ there is at 
most one term in the right hand side of \eqref{eq:poweroff=3sum} which can be non-zero, namely for 
$\overrightarrow{\al}_P=\begin{pmatrix} \al_1\\\al_2\\ \al_3\end{pmatrix} = M^{-1} \begin{pmatrix} P\\ 0 \\0\end{pmatrix}$ under the additional condition  
$\overrightarrow{\al}_P\in \N^3$. Assuming that this is the case, we see that, analogous to the proof of Proposition \ref{prop:MCtruefortwomatrixelt},
$\int_{SU(2)} f^{2P}(g) \,\mathrm{d}g \not=0$. 

So we need to consider the case that $\overrightarrow{\al}_P\not\in \N^3$ for all $P>0$. Then the integral in \eqref{eq:poweroff=3sumh} can only be 
non-zero in case  
\begin{gather*}
M^{-1}\begin{pmatrix} P \\ -a \\ -b \end{pmatrix} 
= 
\frac{1}{\det(M)}\Bigl( P \begin{pmatrix}
m_2n_3-m_3n_2 \\ m_3n_1-m_1n_3 \\ m_1n_2-m_2n_1
\end{pmatrix}
-a \begin{pmatrix}
n_2-n_3 \\ n_3-n_1 \\ n_1-n_2
\end{pmatrix}
- b\begin{pmatrix}
m_3-m_2 \\ m_1-m_3 \\ m_2-m_1
\end{pmatrix} \Bigr) \in \N^3
\end{gather*}
Since $\overrightarrow{\al}_P$ corresponds to the first term, i.e. $a=b=0$, and $\overrightarrow{\al}_P\not\in \N^3$ for all $P>0$
we have $\det(M)^{-1}(m_in_{i+1}-m_{i+1}n_i)<0$ for some $i\in\{1,2,3\}$ with convention $m_4=m_1$, $n_4=n_1$. Then for 
$P > \bigl( |a||n_i-n_{i+1}|+|b||m_{i+1}-m_i|\bigr)/|m_in_{i+1}-m_{i+1}n_i|$ the $i$-th coefficient is negative, so that the integral
in \eqref{eq:poweroff=3sumh} is zero. 
\end{proof}

\begin{rem}\label{rmk:support3} In case $\text{\rm rank}(M)=1$ the convex hull $C$ of $\bigl( (m_i,n_i)\bigr)_{i=1}^3$ equals $\{ (m,n)\}$, and in case $\text{\rm rank}(M)=3$ we have $(0,0)\in C$ if and only if $\exists \, 
\overrightarrow{\al}\in \Q^3_{\geq 0}$ with $M\overrightarrow{\al}= (1,0,0)^t$. From the proof of 
Proposition \ref{prop:MCfor3matrixelements} we see that $\int_{SU(2)}f(g)^P\, dg=0$ for all $P>0$ precisely when
$(0,0)\not\in C$ in the cases $\text{\rm rank}(M)\not=2$.
In case $\text{\rm rank}(M)=2$ the integral in \eqref{eq:poweroff=3sum} can have more than one non-zero term, and we 
have no control on possible cancellations. However, one expects that these cancellations cannot occur for all multiples of $P$ as well. The techniques
of Francoise et al. \cite{FranPYZ} might be useful in this regard considering it as polynomial identies in the $A_i$'s.
\end{rem}

\section{An alternative conjecture for the Mathieu Conjecture for $SU(2)$}\label{sec:MathieuConjforSU2}

Consider an arbitrary $SU(2)$-finite function $f=\sum_{i=1}^{k} A_i t^{\ell_i}_{m_i,n_i}$ with $A_i\neq 0$ for every $1\leq i \leq k$,
then applying the multinomial theorem shows that if 
\begin{equation}\label{eq:powergeneral}
\int_{SU(2)} (f(g))^P\, dg =
\sum_{\al_i\in\N, \sum_{i=1}^k \al_i=P} \binom{P}{\al_1,\dots,\al_k} \prod_{i=1}^k A_i^{\al_i} 
\int_{SU(2)} \prod_{i=1}^k \left(t_{m_i,n_i}^{\ell_i}\right)^{\al_i}(g) \, dg 
\not=0 
\end{equation}
for some $P>0$, then for some $(\al_1,\dots, \al_k)$ we have $\sum_{i=1}^k \frac{\al_i}{P} m_i =  \sum_{i=1}^k \frac{\al_i}{P} n_i=0$ by \eqref{eq:integralproducttlnmisnon0}, so 
$(0,0)$ is in the convex hull $C$ of $\bigl( (m_i,n_i)\bigr)_{i=1}^k$ over $\Q$.

\begin{conj}\label{conj:nonzerointegralpowers}
For any $SU(2)$-finite function 
$f=\sum_{i=1}^{k} A_i t^{\ell_i}_{m_i,n_i}$, $A_i\neq 0$ for all $1\leq i \leq k$, we have that
$\int_{SU(2)} (f(g))^P\, dg=0$ for all $P\in\N_{>0}$ if and only if 
$(0,0)$ is not contained in the closed convex hull of $\bigl( (m_i,n_i)\bigr)_{i=1}^k$.
\end{conj}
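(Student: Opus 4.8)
The plan is to prove the two directions separately. The ``only if'' direction is the easy one and is already essentially contained in the discussion preceding the conjecture: by the multinomial expansion \eqref{eq:powergeneral} together with \eqref{eq:integralproducttlnmisnon0}, if $(0,0)$ is \emph{not} in the closed convex hull $C$ of $\bigl((m_i,n_i)\bigr)_{i=1}^k$, then for every $P>0$ and every multi-index $(\al_1,\dots,\al_k)$ with $\sum\al_i=P$ the point $\frac1P\sum\al_i(m_i,n_i)$ lies in $C$ and hence is nonzero, so every term in \eqref{eq:powergeneral} vanishes and $\int_{SU(2)}(f(g))^P\,dg=0$. (Here one uses that $C$ over $\Q$ and the closed convex hull over $\R$ contain the same rational points, so there is no gap between the rational convex hull appearing in the expansion and the closed real convex hull in the statement.)

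The substance is the ``if'' direction: assuming $(0,0)\in C$, I must produce some $P>0$ with $\int_{SU(2)}(f(g))^P\,dg\neq0$. Since $(0,0)\in C$, Carathéodory's theorem in the plane gives a subset $S\subseteq\{1,\dots,k\}$ with $|S|\le 3$ and nonnegative rationals $(\lambda_i)_{i\in S}$, $\sum_{i\in S}\lambda_i=1$, with $\sum_{i\in S}\lambda_i(m_i,n_i)=(0,0)$. Clearing denominators, fix $\beta_i\in\N$ ($i\in S$), not all zero, with $\sum_{i\in S}\beta_i=:M$ and $\sum_{i\in S}\beta_i m_i=\sum_{i\in S}\beta_i n_i=0$. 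The natural candidate is to take $P$ a large multiple of $M$ and hope that the term $(\al_1,\dots,\al_k)$ supported on $S$ with $\al_i=(P/M)\beta_i$ survives while contributing a nonzero integral; mimicking Proposition \ref{prop:MCtruefortwomatrixelt} and Proposition \ref{prop:MCfor3matrixelements}, the integrand $\prod_{i\in S}\bigl(t^{\ell_i}_{m_i,n_i}\bigr)^{\al_i}(g)$ is bi-$K$-invariant (since the exponents satisfy the $K$-constraint), hence restricts to an integral over $SO(2)$ of a real-valued function (using \cite[III, \S 3,(3),(4)]{Vile}); replacing $P$ by $2P$ makes that integrand a square, so the corresponding integral is strictly positive. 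The key point to nail down is \textbf{that this is the only surviving term}, i.e. that no other multi-index $(\al_1,\dots,\al_k)$ with $\sum\al_i=2P$ satisfies $\sum\al_i m_i=\sum\al_i n_i=0$ — equivalently that $\al\mapsto(\sum\al_i m_i,\sum\al_i n_i)$ has no other lattice point mapping to $(0,0)$ on the simplex $\sum\al_i=2P$.

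This is exactly where the main obstacle lies, and it is a genuine one: for $k\ge 3$ and ``rank $2$'' configurations there can be many multi-indices landing in the kernel, and the integrals they contribute need not be positive, so cancellation is conceivable for a single $P$. My plan to handle it: choose the scaling multiplier $N$ in $P=NM$ carefully so that the $S$-supported term dominates, and attack the possible cancellations over the family $\{2^r P:r\ge0\}$ (or $\{NP:N\ge1\}$) rather than a single exponent. Concretely, each surviving term is (up to a positive multinomial coefficient and a product of the fixed constants $A_i$) an integral over $[-1,1]$ of a product of matrix elements evaluated at $a(\theta)$; one can try to reduce to a statement about polynomials and invoke the Francoise--Płoski--Yang--Zhao machinery \cite{FranPYZ} — in the spirit of Remark \ref{rmk:support3} — to rule out that all these finitely many integrals cancel for \emph{every} $P$ simultaneously. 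If a fully general argument resists, a fallback is to first prove the conjecture under the extra hypothesis that the ``kernel lattice points on the simplex'' are controlled (which already covers the $\mathrm{rank}(M)\neq2$ cases handled in Section \ref{sec:MathieuConjforSU2small}), isolating the remaining rank-$2$ cancellation phenomenon as the sole gap — which, given the phrasing of the paper's abstract, may well be the honest state of affairs.
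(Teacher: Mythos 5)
The statement you are proving is presented in the paper only as a conjecture (Conjecture \ref{conj:nonzerointegralpowers}): the authors give no proof, only supporting evidence for $k\le 3$ and the reduction of the Mathieu Conjecture for $SU(2)$ to it. Your treatment of the easy implication --- if $(0,0)\notin C$ then every term of the multinomial expansion \eqref{eq:powergeneral} vanishes by \eqref{eq:integralproducttlnmisnon0}, since $\frac1P\sum_i\al_i(m_i,n_i)$ lies in $C$ and hence cannot be $(0,0)$ --- is correct and is exactly the argument in the paragraph preceding the conjecture (note only that you label this the ``only if'' direction, whereas in the biconditional as phrased it is the ``if'' direction; the content is right). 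Your parenthetical point that the rational and real closed convex hulls of finitely many integer points contain the same rational points is also needed and correct.

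For the converse you correctly isolate the obstacle but do not overcome it: when more than one multi-index $(\al_1,\dots,\al_k)$ with $\sum_i\al_i=P$ lies in the kernel of $\al\mapsto(\sum_i\al_i m_i,\sum_i\al_i n_i)$, the surviving integrals need not be positive and could in principle cancel for every $P$. The doubling trick from Lemma \ref{lem:MCfortwomatrixelt} does not rescue this: $\int f^{2P}$ is not the integral of the square of the sum of the surviving terms of $f^P$, since the expansion of $f^{2P}$ contains kernel multi-indices that are not of the form $\al+\al'$ with both $\al,\al'$ in the kernel, and in any case the coefficients $A_i$ are complex so even a genuine square need not have positive integral. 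This is precisely the ``$\mathrm{rank}(M)=2$'' phenomenon the paper itself flags as unresolved in Remark \ref{rmk:support3} (``we have no control on possible cancellations''), and your proposed appeal to \cite{FranPYZ} is a hope rather than an argument. So the proposal is not a proof; it establishes one implication and gives an honest, accurate diagnosis of why the other is open --- consistent with the fact that the statement is a conjecture, not a theorem, in the paper.
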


Lemma \ref{lem:1matrixeltpowerP}, Remarks \ref{rmk:supportfor2matrixelt}, \ref{rmk:support3} support
Conjecture \ref{conj:nonzerointegralpowers}. 

\begin{thm}\label{thm:alternativMC} Assume Conjecture \ref{conj:nonzerointegralpowers} holds, 
then the Mathieu Conjecture \ref{conj:mathieu} for $SU(2)$ holds. 
\end{thm}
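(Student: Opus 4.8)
The plan is to show that Conjecture \ref{conj:nonzerointegralpowers} gives exactly the combinatorial input needed to rerun the argument used in Propositions \ref{prop:MCtrueforsinglematrixelt}, \ref{prop:MCtruefortwomatrixelt}, \ref{prop:MCfor3matrixelements} for arbitrary $k$. Write $f=\sum_{i=1}^k A_i t^{\ell_i}_{m_i,n_i}$ with all $A_i\neq 0$ and suppose $\int_{SU(2)}(f(g))^P\,dg=0$ for all $P>0$. By the linearity of the Mathieu Conjecture in $h$ it suffices to take $h=t^{\ell}_{a,b}$ and to prove that $\int_{SU(2)}(f(g))^P h(g)\,dg=0$ for all sufficiently large $P$. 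Using the multinomial expansion as in \eqref{eq:powergeneral}, but with the extra factor $t^\ell_{a,b}$, and invoking \eqref{eq:integralproducttlnmisnon0} together with \eqref{eq:leftrighKbehaviourtlmn}, the term indexed by $(\al_1,\dots,\al_k)$ with $\sum_i\al_i=P$ can be non-zero only if
\begin{equation}\label{eq:coneconditionh}
\sum_{i=1}^k \al_i m_i = -a,\qquad \sum_{i=1}^k \al_i n_i = -b.
\end{equation}

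The key step is then: by Conjecture \ref{conj:nonzerointegralpowers}, the hypothesis $\int_{SU(2)}(f(g))^P\,dg=0$ for all $P>0$ forces $(0,0)\notin C$, the closed convex hull of $\{(m_i,n_i)\}_{i=1}^k$ over $\R$. I would now separate the standing assumption into two cases. If every $(m_i,n_i)=(0,0)$, then $f$ is bi-$K$-invariant, hence a polynomial in $\cos\theta$ on $SO(2)$, and Boyarchenko's result \cite[Cor.~4.1]{FranPYZ} shows the hypothesis $\int_{SU(2)}(f(g))^P\,dg=0$ for all $P>0$ is impossible, so the conjecture holds vacuously — exactly as in the two- and three-term cases. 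Otherwise at least one $(m_i,n_i)\neq(0,0)$ and $(0,0)\notin C$; since $C$ is a compact convex polytope not containing the origin, there is a linear functional strictly separating them, i.e.\ a vector $(\mu,\nu)\in\R^2$ and $c>0$ with $\mu m_i+\nu n_i\geq c>0$ for all $i$. (I would choose $(\mu,\nu)$ rational, which is possible by perturbation since the $(m_i,n_i)$ lie in $\Z^2$.) Then for any $(\al_1,\dots,\al_k)\in\N^k$ with $\sum_i\al_i=P$ satisfying \eqref{eq:coneconditionh} we get $-\mu a-\nu b=\sum_i \al_i(\mu m_i+\nu n_i)\geq cP$, which is impossible once $P>(|\mu a|+|\nu b|)/c$. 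Hence for all such $P$ no term in the expansion \eqref{eq:powergeneral} with the extra factor $t^\ell_{a,b}$ can be non-zero, so $\int_{SU(2)}(f(g))^P h(g)\,dg=0$, proving the Mathieu Conjecture for $f$ and $h$, and therefore (by linearity in $h$ and the Peter--Weyl decomposition of a general $SU(2)$-finite $h$) for $SU(2)$.

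I expect the main obstacle to be purely bookkeeping rather than conceptual: one must be careful that the separating functional can be taken with rational (or, after clearing denominators, integer) coefficients so that the bound on $P$ is uniform and finite, and one must make sure \eqref{eq:integralproducttlnmisnon0} is applied with the index tuple that includes $t^\ell_{a,b}$, i.e.\ with $\al$ appended for that extra matrix element; this is exactly the step that turned $Pm+a$ into the obstruction in Proposition \ref{prop:MCtrueforsinglematrixelt}. No deeper analytic input beyond Conjecture \ref{conj:nonzerointegralpowers} and the already-cited polynomial result \cite[Cor.~4.1]{FranPYZ} is needed: the whole proof is the separating-hyperplane argument for $(0,0)\notin C$ combined with the multinomial expansion.
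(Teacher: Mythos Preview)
Your proof is correct and follows essentially the same route as the paper: Conjecture \ref{conj:nonzerointegralpowers} gives $(0,0)\notin C$, and then the multinomial expansion together with \eqref{eq:integralproducttlnmisnon0} forces $(-\tfrac{a}{P},-\tfrac{b}{P})\in C$ whenever the integral is non-zero, which fails for large $P$. The paper phrases this last step more directly via the closedness of $C$ (since $(-\tfrac{a}{P},-\tfrac{b}{P})\to(0,0)\notin C$) rather than through a separating hyperplane, and it does not need your separate bi-$K$-invariant case or the rationality of $(\mu,\nu)$: the former is already subsumed by the assumed conjecture (if all $(m_i,n_i)=(0,0)$ then $(0,0)\in C$), and for the latter any real bound on $P$ suffices.
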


\begin{proof} It suffices to show that $\int_{SU(2)} (f(g))^Pt^\ell_{a,b}(g)\, dg=0$ for $P$ sufficiently large 
assuming that $(0,0)$ is not contained in the closed convex hull $C$ of $\bigl( (m_i,n_i)\bigr)_{i=1}^k$. 
Using \eqref{eq:powergeneral} we see that $\int_{SU(2)} (f(g))^Pt^\ell_{a,b}(g)\, dg$ can only be
non-zero if $(-\frac{a}{P},-\frac{b}{P})\in C$. Since $(0,0)\not\in C$, we see that for $P$ sufficiently large 
this is not the case and the integral is zero. 
\end{proof}

\subsection*{Acknowledgement.} TD thanks Wilberd van der Kallen, Hjalmar Rosengren, Harm Derksen, Pablo Rom\'an  for useful discussions.
We thank Harm Derksen for pointing out Boyarchenko's result and Arno van den Essen for pointing out \cite{FranPYZ}.

\end{document}